\newcommand{\sub}{\mathrm{sub}}
\newtheorem{theorem}{Theorem}
\numberwithin{theorem}{section}
\newtheorem{conjecture}[theorem]{Conjecture}
\newtheorem{question}[theorem]{Question}
\newtheorem{remark}[theorem]{Remark}
\newtheorem{lemma}[theorem]{Lemma}
\begin{document}
\title{$q$-analogues of Fisher's inequality and oddtown theorem}






\author{
Hiranya Kishore Dey \\ 
Department of Mathematics\\
Indian Institute of Science, Bangalore\\
Bangalore 560 012, India.\\
email: hiranyadey@iisc.ac.in
}



\maketitle

\begin{abstract}

\medskip 
\noindent 
A classical result in design theory, known as Fisher's inequality, states that if every pair of clubs in a town shares the same number of members, then the number of clubs cannot exceed the number of inhabitants in the town. In this short note, we establish a 
$q$-analogue of Fisher's inequality. Additionally, we present a 
$q$-analogue of the oddtown theorem for the case when 
$q$ is an odd prime power.

\medskip 
\noindent
{\bf Keywords:} Fisher's inequality; oddtown theorem; q-analogue 

\medskip 
\noindent 
{\bf Subject Classification:} 05D05, 05A30  
\end{abstract}

	
\section{Introduction}
\label{sec:intro}

Finding $q$-analogues of well-known problems in extremal set theory has been an active area of research. 
Some classical results in this field for which 
$q$-analogues have been studied include Bollob\'{a}s theorem \cite{bollobas}, Erd\H{o}s-Ko-Rado theorem \cite{katona, erdos-ko-rado} and Hilton-Milner theorem \cite{hilton-milner}. 
A $q$-analogue 
of the Erd\H{o}s-Ko-Rado theorem was established by Hsieh \cite{hsieh} and independently by Frankl and Wilson \cite{frankl-wilson}. Blokhuis et. al. \cite{blokhuis-et-al} obtained a $q$-analogue of the Hilton-Milner theorem, while Lov\'{a}sz \cite{lovasz} derived a $q$-analogue of the Bollob\'{a}s theorem.

In this paper, we obtain $q$-analogues of two other well-known results in extremal set theory:  Fisher's inequality \cite{fisher} and oddtown theorem \cite{berlekamp}. To the best of our knowledge, these 
$q$-analogues have not previously been explored in the literature.

For a positive integer $n$, let $[n]$ denote the set $\{1,\dots,n\}$ and its $q$-analogue is defined as 
\[ [n]_q:=\frac{1-q^n}{1-q}=1+q+q^2+\hdots+q^{n-1}. \] 
 It is easy to see that 
 \begin{equation} 
 \lim_{q \to 1} \  [n]_q = n.\label{eqn:lim}
\end{equation}
The $q$-factorial of a positive integer $n$, denoted by $[n]_q!$, is defined as 
\[ [n]_q ! : = \prod_{i=1}^n  [i]_q.\]
 Moreover, the $q$-binomial 
coefficient is defined as
\[\binom{n}{k}_q:= \frac{[n]_q!}{[k]_q ! [n-k]_q !}.\] 

Let $F_q$ denote the field in $q$ elements and consider the $n$ dimensional vector space $F_q^n$. 
Let $\sub(F_q^n)$ denote the set containing all the subspaces of $F_q^n$. It is well-known that \cite{goldman-rota} the number of $k$ dimensional subspaces of $F_q^n$ is $\binom{n}{k}_q$ and hence
\[ |\sub(F_q^n)| = \displaystyle \sum _{k=0}^n \binom{n}{k}_q.\]

\subsection{Fisher's inequality}

The well-known Fisher's inequality states the following: 

\begin{theorem}[Fisher] 
\label{thm:Fischer}
Let $k$ be a positive integer and let $A_1, \dots, A_m$ be distinct subsets of $[n]$. If  $|A_i \cap A_j|=k$ for every $1 \leq  i < j \leq m$, then $m \leq n$.  
\end{theorem}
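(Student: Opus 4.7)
The plan is to attack this with the classical linear algebra method: identify each $A_i$ with its characteristic vector $v_i \in \mathbb{R}^n$ (so the $j$-th coordinate of $v_i$ is $1$ if $j\in A_i$ and $0$ otherwise) and prove that $v_1,\dots,v_m$ are linearly independent in $\mathbb{R}^n$, which immediately forces $m\le n$. The inner product structure is determined by the hypothesis: $\langle v_i,v_j\rangle = |A_i\cap A_j| = k$ for $i\ne j$ and $\langle v_i,v_i\rangle = |A_i|$.

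Before doing the independence argument I would separate out a degenerate case, because the nicest version of the argument needs $|A_i|>k$ for every $i$. First I would observe that at most one of the $A_i$ can have cardinality exactly $k$: if two distinct sets $A_i,A_j$ both had size $k$ and intersected in $k$ elements, they would coincide. If some $A_{i_0}$ has $|A_{i_0}|=k$, then $|A_{i_0}\cap A_j|=k=|A_{i_0}|$ forces $A_{i_0}\subseteq A_j$ for all $j\ne i_0$, and then for any two $j\ne\ell$ (both different from $i_0$) the relation $|A_j\cap A_\ell|=k$ combined with $A_{i_0}\subseteq A_j\cap A_\ell$ forces $A_j\cap A_\ell = A_{i_0}$. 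Thus the sets $A_j\setminus A_{i_0}$ are pairwise disjoint and nonempty inside $[n]\setminus A_{i_0}$, a set of size $n-k$, yielding $m-1\le n-k$ and hence $m\le n$ (using $k\ge 1$).

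So I may assume $|A_i|>k$ for every $i$. Suppose $\sum_{i=1}^m c_i v_i=0$ for scalars $c_i\in\mathbb{R}$. Taking the squared norm and grouping the diagonal and off-diagonal contributions gives
\[
0 = \Bigl\|\sum_i c_i v_i\Bigr\|^2 = \sum_{i=1}^m c_i^2 |A_i| + 2k\sum_{i<j} c_ic_j = \sum_{i=1}^m c_i^2\bigl(|A_i|-k\bigr) + k\Bigl(\sum_i c_i\Bigr)^2.
\]
Both terms on the right are nonnegative (using $|A_i|>k$ and $k\ge 1$), so each must vanish; the first sum vanishing forces $c_i=0$ for every $i$. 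Hence $v_1,\dots,v_m$ are linearly independent in $\mathbb{R}^n$, proving $m\le n$.

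The only genuine subtlety is the $|A_i|=k$ case, where the quadratic form argument degenerates; handling it by the direct combinatorial reduction above is cleanest. I would expect the main conceptual work to be in organizing this split, rather than in the linear-algebraic core, which is essentially a one-line identity once the incidence vectors are introduced.
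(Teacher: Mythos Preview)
Your proof is correct. The paper does not actually include a proof of this classical theorem; it is stated with references and then used as motivation for the $q$-analogue (Theorem~\ref{thm:Fischer-q}), which \emph{is} proved in the paper. That said, the paper's proof of the $q$-analogue follows exactly the same linear-algebraic template you use: associate incidence vectors, assume a dependence $\sum \lambda_A f_A=0$, expand the squared norm, and rewrite it as $\sum \lambda_A^2([\dim A]_q-[k]_q)+[k]_q(\sum \lambda_A)^2=0$.

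The one organizational difference is in the degenerate case. You peel off the situation where some $|A_{i_0}|=k$ and dispose of it by a direct combinatorial counting (the remaining sets, after removing $A_{i_0}$, are pairwise disjoint in $[n]\setminus A_{i_0}$). The paper instead keeps this case inside the linear-algebra argument: since at most one member of the family can have dimension exactly $k$, the vanishing of the first sum forces $\lambda_A\neq 0$ for at most one $A$, and then the term $[k]_q(\sum\lambda_A)^2$ is strictly positive, giving the contradiction. Both routes are clean; the paper's is slightly more uniform, while yours makes the edge case fully explicit and self-contained.
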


Theorem \ref{thm:Fischer} was first proved by Fisher \cite{fisher} when $k=1$ and 
all the sets $A_i$ have the same size. 
Bose \cite{bose} used linear algebraic argument to solve the above combinatorial 
problem when all the sets are of same size. 
In \cite{debruijn-erdos},  de Bruijn and Erd\H{o}s relaxed the uniformity condition for the sets $A_i$.  The
first proof of the general form of the Fisher's Inequality was given by Majumdar \cite{majumdar}
using linear algebraic methods. Mathew and Mishra \cite{mathew-mishra} gave a simple, counting based proof of Fisher's Inequality that does
not use any tools from linear algebra. 
See the references 
\cite{babai, isbel, majumdar, woodall} for more on Fisher's inequality. 
In this short note, 
our first result is the following $q$-analogue of Fisher's inequality. 

\begin{theorem} 
\label{thm:Fischer-q}
Let $k$ be a positive integer and let $\mathcal{F} \subseteq \sub(F_q^n)$ be such that $\dim(A \cap B)=k$ for all 
$A, B \in \mathcal{F}$ with $A \neq B$. Then, $|\mathcal{F}| \leq [n]_q$.
\end{theorem}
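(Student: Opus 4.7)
The plan is to adapt the Bose-style linear algebra proof of classical Fisher's inequality, with the $[n]_q$ one-dimensional subspaces of $F_q^n$ playing the role of the $n$ ground-set elements. For each $A \in \mathcal{F}$, I would introduce a characteristic vector $v_A \in \mathbb{R}^{[n]_q}$ whose coordinates are indexed by the one-dimensional subspaces $L$ of $F_q^n$, with $(v_A)_L = 1$ if $L \subseteq A$ and $(v_A)_L = 0$ otherwise. A direct count then gives
\[
\langle v_A, v_B \rangle = |\{L : L \subseteq A \cap B\}| = [\dim(A \cap B)]_q,
\]
so the Gram matrix of $\{v_A\}_{A \in \mathcal{F}}$ equals $M = [k]_q J + D$, where $J$ is the all-ones matrix and $D$ is diagonal with entries $[\dim A]_q - [k]_q \geq 0$ (every $A$ satisfies $\dim A \geq k$; the trivial case $|\mathcal{F}| \leq 1$ is set aside).

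The main case would be when every $A \in \mathcal{F}$ satisfies $\dim A > k$. Then $D$ is strictly positive diagonal, hence positive definite, while $[k]_q J$ is positive semi-definite. Therefore $M$ is positive definite, the vectors $\{v_A\}_{A \in \mathcal{F}}$ are linearly independent in $\mathbb{R}^{[n]_q}$, and $|\mathcal{F}| \leq [n]_q$ follows at once.

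The remaining case is when some $A_0 \in \mathcal{F}$ has $\dim A_0 = k$. Then $\dim(A_0 \cap B) = k = \dim A_0$ forces $A_0 \subseteq B$ for every other $B \in \mathcal{F}$, and the analogous argument applied to $B \cap C$ shows $B \cap C = A_0$ for distinct $B, C \in \mathcal{F} \setminus \{A_0\}$. Passing to the quotient $F_q^n / A_0 \cong F_q^{n-k}$, the images $\bar B = B / A_0$ are nonzero subspaces that pairwise intersect only at the origin, so their sets of nonzero vectors are disjoint. Counting these vectors gives $(|\mathcal{F}|-1)(q-1) \leq q^{n-k}-1$, hence $|\mathcal{F}| \leq [n-k]_q + 1 \leq [n]_q$, where the last inequality follows from $[n]_q - [n-k]_q = q^{n-k}[k]_q \geq 1$. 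The only non-routine choice in this plan is the coordinate space: taking the one-dimensional subspaces of $F_q^n$ (rather than, say, the whole subspace lattice) is exactly what makes the Gram inner product equal a $q$-integer and produces the bound $[n]_q$; once that choice is made, the positive-definiteness computation and the quotient-counting step are both straightforward.
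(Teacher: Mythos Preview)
Your proposal is correct and follows essentially the same approach as the paper: both introduce characteristic vectors in $\mathbb{R}^{[n]_q}$ indexed by the one-dimensional subspaces of $F_q^n$, observe that $\langle v_A, v_B\rangle = [\dim(A\cap B)]_q$, and deduce linear independence from the shape of the Gram matrix $M = [k]_q J + D$. The only real difference is how the degenerate case $\dim A_0 = k$ is handled: you split it off and pass to the quotient $F_q^n/A_0$ to count, while the paper keeps a single argument by noting that at most one diagonal entry of $D$ can vanish and that the term $[k]_q\bigl(\sum_A \lambda_A\bigr)^2$ then forces the last coefficient to zero. Your quotient step gives the incidentally sharper bound $|\mathcal{F}|\le [n-k]_q+1$ in that case, but the paper's treatment is more uniform; either way the core idea is identical.
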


Consider the following family 
\begin{equation}
\mathcal{F}_1 = \{  U \in \sub({F}_q^n): \dim(U)=1 \}. 
\label{eqn:defn-F1}
\end{equation} 
The intersection of any two of members from $\mathcal{F}_1$ is clearly the zero subspace. 
Thus, the upper bound in Theorem \ref{thm:Fischer-q} is the best possible upper bound. From \eqref{eqn:lim}, we note that the upper bound in Theorem 
\ref{thm:Fischer} is the limit case of the upper bound in Theorem \ref{thm:Fischer-q} when $q$ approaches $1$. 

\subsection{Oddtown theorem} 

The oddtown problem is a result which highlight the linear algebra method \cite{babai-frankl} in
extremal combinatorics. There have been numerous extensions of this result in the literature \cite{deza-frankl-singhi, frankl-odlyzko, sudakov-vieira, szabo, vu}. 

Let $\mathcal{A} = \{ A_1, \dots, A_m \}$ be a family of subsets of $[n]$. We say that $\mathcal{A}$ is an oddtown if all its sets have
odd size and
$|A_i \cap A_j|$ is even for $ 1 \leq i < j \leq m$. 
Answering a question of 
Erd\H{o}s, 
Berlekamp \cite{berlekamp} and Graver \cite{Graver} independently proved the following oddtown theorem.

\begin{theorem}[Oddtown theorem]
\label{thm:oddtown}
Let $\mathcal{F} \subseteq 2^{[n]}$ be such that $|A|$ is odd for all $A \in \mathcal{F}$ and $|A \cap B|$ is even for all $A, B \in \mathcal{F}$ with $A \neq B$. Then, we have $|\mathcal{F}| \leq n$. 
\end{theorem}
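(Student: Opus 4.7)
The plan is to use the standard linear algebra method over the field $\mathbb{F}_2$. For each set $A_i \in \mathcal{F}$, I would associate its characteristic vector $v_i \in \mathbb{F}_2^n$ defined by $(v_i)_j = 1$ if $j \in A_i$ and $0$ otherwise. The key observation is that the standard inner product over $\mathbb{F}_2$ satisfies $\langle v_i, v_j \rangle = |A_i \cap A_j| \pmod 2$, so the two hypotheses of the theorem translate directly into statements about inner products.

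Concretely, the condition $|A_i|$ odd gives $\langle v_i, v_i \rangle = 1$ in $\mathbb{F}_2$, and the condition that $|A_i \cap A_j|$ is even for $i \neq j$ gives $\langle v_i, v_j \rangle = 0$. Writing $\mathcal{F} = \{A_1, \dots, A_m\}$ and letting $M$ be the $m \times n$ matrix whose rows are the $v_i$, the Gram matrix $M M^{T}$ equals the $m \times m$ identity over $\mathbb{F}_2$.

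From here I would conclude that $M M^{T}$ has rank $m$ over $\mathbb{F}_2$, which forces $\mathrm{rank}(M) \geq m$; since $M$ has only $n$ columns, $\mathrm{rank}(M) \leq n$, so $m \leq n$. An equivalent phrasing is to suppose $\sum_i c_i v_i = 0$ for some $c_i \in \mathbb{F}_2$, take the inner product with $v_j$, and use $\langle v_i, v_j \rangle = \delta_{ij}$ to deduce $c_j = 0$ for every $j$; this shows $v_1, \dots, v_m$ are linearly independent in $\mathbb{F}_2^n$, again yielding $m \leq n$.

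There is no real obstacle here: the whole argument rests on choosing the right ambient field, and $\mathbb{F}_2$ is forced on us by the parity hypotheses. The only thing to be careful about is that distinct sets indeed give distinct characteristic vectors, which is immediate, and that working over $\mathbb{F}_2$ (not $\mathbb{R}$ or $\mathbb{Q}$) is essential since $M M^{T}$ being the identity over $\mathbb{F}_2$ is the cleanest way to extract linear independence from a parity condition.
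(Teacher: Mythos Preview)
Your argument is correct and is exactly the classical linear-algebra proof of the oddtown theorem. Note, however, that the paper does not actually give its own proof of Theorem~\ref{thm:oddtown}; it merely states the result and cites Berlekamp and Graver. That said, your method is precisely the template the paper follows in its proof of the $q$-analogue (Theorem~\ref{thm:oddtown-qanalogue}): associate to each member an incidence vector, observe that the pairwise inner products are even while the self inner products are odd, and deduce linear independence over $\mathbb{F}_2$ by pairing a putative dependence relation against each $f_B$. So your proposal agrees with the paper's approach in spirit, even though there is no proof in the paper to compare against directly.
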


In this paper, we prove the following $q$-analogue of the oddtown theorem. 

\begin{theorem}
\label{thm:oddtown-qanalogue}
Let $\mathcal{F} \subseteq \sub(F_q^n)$ be such that $\dim(A)$ is odd for all $A \in \mathcal{F}$ and $\dim(A \cap B)$ is even for all $A, B \in \mathcal{F}$ with $A \neq B$. 
Then, 
 if $q$ is an odd prime power, 
 we have 
$|\mathcal{F}| \leq [n]_q$. 
\end{theorem}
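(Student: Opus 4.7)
My plan is to adapt the classical linear-algebra proof of the oddtown theorem, but replacing the ambient space $\mathbb{F}_2^n$ with $\mathbb{F}_2^{P}$, where $P$ is the set of one-dimensional subspaces of $F_q^n$. Note that $|P|=[n]_q$, so this is exactly the bound we want.

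For each $A\in\mathcal{F}$, define a characteristic vector $v_A\in\mathbb{F}_2^P$ by setting the coordinate of $v_A$ at $L\in P$ to be $1$ if $L\subseteq A$ and $0$ otherwise. The number of one-dimensional subspaces contained in a $k$-dimensional subspace of $F_q^n$ is $[k]_q$, so the usual dot product over $\mathbb{Z}$ gives
\[
\langle v_A,v_B\rangle \;=\; \#\{L\in P : L\subseteq A\cap B\} \;=\; [\dim(A\cap B)]_q.
\]
The key parity observation is the following: since $q$ is an odd prime power, every power $q^i$ is odd, so
\[
[k]_q \;=\; 1+q+q^2+\cdots+q^{k-1} \;\equiv\; k \pmod{2}.
\]

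Now reduce the inner products modulo $2$. For each $A\in\mathcal{F}$ we have $\langle v_A,v_A\rangle\equiv[\dim A]_q\equiv\dim A\equiv 1\pmod 2$, while for distinct $A,B\in\mathcal{F}$ we have $\langle v_A,v_B\rangle\equiv[\dim(A\cap B)]_q\equiv\dim(A\cap B)\equiv 0\pmod 2$. Hence $\{v_A : A\in\mathcal{F}\}$ is an orthonormal system in $\mathbb{F}_2^P$, and a standard argument (if $\sum c_A v_A = 0$, take the inner product with a fixed $v_B$ to force $c_B=0$) shows these vectors are linearly independent over $\mathbb{F}_2$. Since $\dim_{\mathbb{F}_2}\mathbb{F}_2^P=|P|=[n]_q$, we conclude $|\mathcal{F}|\le[n]_q$.

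The only real content is spotting the right coordinate set: using ordinary characteristic vectors on $F_q^n$ fails because $|A|=q^{\dim A}$ is always odd when $q$ is odd and carries no parity information about $\dim A$. Switching to the projective point set $P$ replaces set size by $[k]_q$, which for odd $q$ faithfully records the parity of $k$ and makes the oddtown argument go through verbatim. The hypothesis that $q$ is odd enters exactly at the congruence $[k]_q\equiv k\pmod 2$, so a different idea would likely be required for the even characteristic case.
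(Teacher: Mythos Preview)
Your proof is correct and essentially identical to the paper's own argument: both encode each subspace $A$ by its incidence vector on the set of one-dimensional subspaces of $F_q^n$, observe that the inner product equals $[\dim(A\cap B)]_q$, use the parity $[k]_q\equiv k\pmod 2$ for odd $q$, and conclude linear independence over $\mathbb{F}_2$. If anything, you are slightly more explicit than the paper about working over $\mathbb{F}_2$ and about the congruence $[k]_q\equiv k\pmod 2$.
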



Again, the family $\mathcal{F}_1$, as defined in \eqref{eqn:defn-F1}, shows that the upper bound in Theorem \ref{thm:oddtown-qanalogue} is the best possible upper bound. 

The following result is analogous to the oddtown theorem but it swiches the parity conditions. See \cite[Exercise 1.1.5]{babai-frankl} for reference.  

\begin{theorem}[Reverse oddtown theorem]
\label{thm:oddtown-rev}
Let $\mathcal{F} \subseteq 2^{[n]}$ be such that $|A|$ is even for all $A \in \mathcal{F}$ and $|A \cap B|$ is odd for all $A, B \in \mathcal{F}$ with $A \neq B$. Then, we have $|\mathcal{F}| \leq n$ when $n$ is odd and $|\mathcal{F}| \leq n-1$ when $n$ is even. 
\end{theorem}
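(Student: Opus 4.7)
My plan is to use the linear algebra method over $F_2$. I would represent each $A_i \in \mathcal{F}$ by its characteristic vector $v_i \in F_2^n$, so that the hypotheses become $\langle v_i, v_i \rangle = 0$ and $\langle v_i, v_j \rangle = 1$ for $i \neq j$, where the inner product is taken mod $2$. The $m \times m$ Gram matrix is then $J + I$ over $F_2$. Since $(J+I)\mathbf{1}_m = (m+1)\mathbf{1}_m$, its rank equals $m$ when $m$ is even, and equals $m-1$ when $m$ is odd (with $\mathbf{1}_m$ spanning the one-dimensional kernel in the odd case).

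For $n$ odd, the cleanest route is the complementation trick. Setting $v_i' := v_i + \mathbf{1}_n$, the characteristic vector of $A_i^c := [n] \setminus A_i$, one has $|A_i^c| = n - |A_i|$ odd and $|A_i^c \cap A_j^c| = n - |A_i \cup A_j| \equiv 1 + 1 = 0 \pmod{2}$ for $i \neq j$. Hence $\{A_i^c\}$ forms an oddtown family, and Theorem \ref{thm:oddtown} immediately yields $m \leq n$.

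For $n$ even, I would pass to a quotient of the even-weight subspace to squeeze out two extra dimensions. Let $W := \{x \in F_2^n : \langle x, \mathbf{1}_n \rangle = 0\}$; this has dimension $n-1$ and contains every $v_i$ because each $|A_i|$ is even. Since $n$ is even, $\mathbf{1}_n \in W$, and moreover $\mathbf{1}_n$ lies in the radical of the bilinear form restricted to $W$: for every $w \in W$ one has $\langle \mathbf{1}_n, w \rangle = |w| \equiv 0 \pmod{2}$. Consequently the form descends nondegenerately to $\overline{W} := W / \langle \mathbf{1}_n \rangle$, a space of dimension $n - 2$. The images $\overline{v}_i$ remain pairwise distinct, since $\overline{v}_i = \overline{v}_j$ with $i \neq j$ would force $A_j = [n] \setminus A_i$, giving $|A_i \cap A_j| = 0$, which contradicts the odd-intersection hypothesis. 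The Gram matrix of $\overline{v}_1, \ldots, \overline{v}_m$ is still $J + I$, and since the form on $\overline{W}$ is nondegenerate, the rank of this Gram matrix is at most $\dim \overline{W} = n - 2$. Combining with the rank computation from the first paragraph, $m$ even forces $m \leq n - 2$, while $m$ odd forces $m - 1 \leq n - 2$, so $m \leq n - 1$ in either case.

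The main obstacle I anticipate is the $n$ even case, where the naive rank argument applied in $F_2^n$ only yields $m \leq n + 1$. The key insight is that the all-ones vector $\mathbf{1}_n$ becomes simultaneously an element of the even-weight subspace and a radical vector of the restricted form exactly when $n$ is even, enabling the descent to a nondegenerate quotient of dimension $n - 2$. Once this is spotted, the remaining bookkeeping — pairwise distinctness of the $\overline{v}_i$ and the parity-based rank count — is routine.
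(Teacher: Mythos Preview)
Your proof is correct. The paper does not actually prove this classical statement --- it simply cites \cite[Exercise~1.1.5]{babai-frankl} --- but it does prove the $q$-analogue, Theorem~\ref{thm:oddtown-qanalogue-rev}, and that argument specialises verbatim to the set case. The paper's route is more uniform than yours: form the $m\times n$ incidence matrix $M$ over $F_2$, observe $MM^T=J_m-I_m$, note that the all-ones column lies in $\ker M$ (every row has even weight), and conclude $\mathrm{rank}(J_m-I_m)\le\mathrm{rank}(M)\le n-1$; the parity of $n$ then enters only at the end, via a comparison with the rank of $J_m-I_m$ from Lemma~\ref{lem:imp-IJ} together with the observation that $m=n$ forces $m$ and $n$ to have the same parity. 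Your argument differs in both cases: for odd $n$ you reduce to Theorem~\ref{thm:oddtown} by passing to complements, and for even $n$ you quotient the even-weight hyperplane by its radical $\langle\mathbf{1}_n\rangle$ to obtain a nondegenerate form on a space of dimension $n-2$. The complementation trick is slick but does not transport to the subspace setting (the complement of a subspace is not a subspace), which is presumably why the paper opts for the rank argument. Your quotient argument for even $n$ is a bit more conceptual and even yields the marginally sharper $m\le n-2$ when $m$ is even. One small remark: the pairwise distinctness of the $\overline{v}_i$ that you verify is not actually needed for the bound --- the inequality $\mathrm{rank}(J_m-I_m)\le\dim\overline{W}$ holds regardless.
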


We have the following $q$-analogue of Theorem \ref{thm:oddtown-rev}.  

\begin{theorem}
\label{thm:oddtown-qanalogue-rev}
Let $\mathcal{F} \subseteq \sub(F_q^n)$ be such that $\dim(A)$ is even for all $A \in \mathcal{F}$ and $\dim(A \cap B)$ is odd for all $A, B \in \mathcal{F}$ with $A \neq B$. Then, if $q$ is an odd prime power, we have $|\mathcal{F}| \leq [n]_q$ when $n$ is odd and $|\mathcal{F}| \leq [n]_q-1$ when $n$ is even. 
\end{theorem}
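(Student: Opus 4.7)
The plan is to mirror the linear-algebraic proof of Theorem~\ref{thm:oddtown-rev} by embedding each subspace into an $\mathbb{F}_2$-vector space of dimension $N := [n]_q$, indexed by the one-dimensional subspaces (``points'') of $F_q^n$. For each $A \in \mathcal{F}$, I would associate the characteristic vector $v_A \in \mathbb{F}_2^N$ whose $L$-coordinate equals $1$ if and only if the line $L$ is contained in $A$. The degenerate case in which the zero subspace belongs to $\mathcal{F}$ can be disposed of at the outset: its intersection with any other member of $\mathcal{F}$ has even dimension $0$, contradicting the hypothesis, so in that case $|\mathcal{F}| = 1$ and the bound is trivial. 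I henceforth assume $\dim A \geq 2$ for every $A \in \mathcal{F}$.

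The crucial parity identity, valid because $q$ is an odd prime power, is
\[ [d]_q \;=\; 1 + q + q^2 + \cdots + q^{d-1} \;\equiv\; d \pmod 2 \qquad (d \geq 0). \]
Since the number of one-dimensional subspaces contained in a $d$-dimensional subspace equals $[d]_q$, the standard bilinear form on $\mathbb{F}_2^N$ gives
\[ v_A \cdot v_B \;=\; [\dim(A \cap B)]_q \;\equiv\; \dim(A \cap B) \pmod 2, \]
and in particular $v_A \cdot v_A \equiv \dim A \pmod 2$. The hypotheses of the theorem therefore translate to $v_A \cdot v_A = 0$ for all $A \in \mathcal{F}$ and $v_A \cdot v_B = 1$ whenever $A \neq B$, placing us in exactly the reverse oddtown configuration inside $\mathbb{F}_2^N$ with $N = [n]_q$ playing the role of $n$ in Theorem~\ref{thm:oddtown-rev}.

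From here I would execute the standard proof of Theorem~\ref{thm:oddtown-rev} verbatim in $\mathbb{F}_2^N$. Each $v_A$ has even weight and so lies in the codimension-one subspace $E$ of even-weight vectors; pairing a putative relation $\sum_A c_A v_A = 0$ with each $v_B$ yields $c_B = S := \sum_A c_A$ for every $B$, and substituting back gives $(|\mathcal{F}| - 1) S = 0$ in $\mathbb{F}_2$. If $|\mathcal{F}|$ is even this forces $S = 0$ and hence linear independence of the $v_A$ inside $E$, giving $|\mathcal{F}| \leq N - 1$; if $|\mathcal{F}|$ is odd one loses at most one dimension through the relation $\sum v_A = 0$, giving $|\mathcal{F}| \leq N$. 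Because $q$ is odd, $N = [n]_q \equiv n \pmod 2$, so the parity of $N$ matches that of $n$; combining the two cases reproduces exactly the bounds claimed: $|\mathcal{F}| \leq [n]_q$ when $n$ is odd, and $|\mathcal{F}| \leq [n]_q - 1$ when $n$ is even.

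No serious obstacle is expected. The argument is essentially the set-theoretic reverse oddtown proof transported to the subspace lattice via the line-incidence encoding, and the one place that demands real care is the parity bookkeeping encoded in $[d]_q \equiv d \pmod 2$ for odd $q$ — this is precisely what allows the classical $\mathbb{F}_2$-linear algebra to survive the passage to the $q$-analogue. I expect Theorem~\ref{thm:oddtown-qanalogue} to be handled by the same encoding with the classical oddtown theorem substituted for the reverse version.
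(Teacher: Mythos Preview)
Your proposal is correct and follows essentially the same route as the paper: both encode each subspace by its incidence vector on the $[n]_q$ one-dimensional subspaces, work over $\mathbb{F}_2$, and use that for odd $q$ the inner products satisfy $\langle v_A,v_B\rangle \equiv \dim(A\cap B)\pmod 2$, reducing the problem to the classical reverse oddtown configuration in $\mathbb{F}_2^{[n]_q}$. The only cosmetic difference is that the paper packages the endgame as the rank inequality $\mathrm{rank}(J_m-I_m)\le \mathrm{rank}(M)\le [n]_q-1$ (invoking a separate lemma on $\mathrm{rank}_{\mathbb{F}_2}(J_m-I_m)$), whereas you analyze the possible linear relations among the $v_A$ directly; these are two presentations of the same argument.
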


\begin{remark}
\label{rem:rev-odd}
When $n$ is odd, consider the family \begin{equation}
    \mathcal{F}_2 = \{  U \in \sub({F}_q^n): \dim(U)=n-1 \}. 
    \label{eqn:defn-F1}
    \end{equation}
Each $A \in \mathcal{F}_2$ has even dimension. For $A, B \in \mathcal{F}_2$ with $A \neq B$, clearly $\dim(A \cap B) = n-2$, which is odd. The cardinality of $\mathcal{F}_2$ is $\binom{n}{n-1}_q$ which is the same as $[n]_q$. Hence, when $n$ is odd, our result is the best possible. 

When $n$ is even, let $\{e_1, \dots, e_n \}$ denote the standard basis of $F_q^n$ and consider the $n-1$ dimensional subspace
$W$, generated by the vectors $e_1, \dots, e_{n-1} $. Let $\mathcal{F}_3$ denote the family containing all the $n-2$ dimensional subspaces of $W$. Then, $\mathcal{F}_3$ is 
a family of size $\binom{n-1}{n-2}_q=[n-1]_q$. We note that both 
\begin{equation}  
\lim_{q \to 1} \ [n-1]_q = n-1  \hspace{5 mm}  \mbox{ and } \hspace{5 mm} \lim_{q \to 1} 
\ [n]_q -1=n-1.
\label{eqn:rev-odd-ex}
\end{equation} 
But $[n]_q-1-([n-1]_q-1)=q^{n-1}-1$ is large when $n$ is large. 
\end{remark}

Based on Remark \ref{rem:rev-odd} and limited data, we conjecture the following stronger upper bound when $n$ is even. 

\begin{conjecture}
\label{conj:reverse-oddtown-q-oddprimepower}
Let $n$ be an even positive integer and let $\mathcal{F} \subseteq \sub(F_q^n)$ be such that $\dim(A)$ is even for all $A \in \mathcal{F}$ and $\dim(A \cap B)$ is odd for all $A, B \in \mathcal{F}$ and $A \neq B$. Then, if $q$ is an odd prime power, we have $|\mathcal{F}| \leq [n-1]_q$. 
\end{conjecture}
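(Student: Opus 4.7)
The plan is to sharpen the characteristic-vector proof of Theorem~\ref{thm:oddtown-qanalogue-rev} by observing that for any family $\mathcal{F}$ as in the conjecture the relevant characteristic vectors lie in a proper subspace of $\mathbb{F}_2^X$ whose dimension is (conjecturally) at most $[n-1]_q$.

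For each $A \in \sub(F_q^n)$, define $\chi_A \in \mathbb{F}_2^X$ by $\chi_A(L) = 1$ iff $L \subseteq A$, where $X$ denotes the set of $1$-dimensional subspaces of $F_q^n$, so $|X| = [n]_q$. A direct count gives $\chi_A \cdot \chi_B = [\dim(A \cap B)]_q$ as integers, which has the same parity as $\dim(A \cap B)$ since $q$ is odd. For $\mathcal{F}$ as in the conjecture the $\chi_A$ therefore form a reverse oddtown configuration: $\chi_A \cdot \chi_A \equiv 0$ and $\chi_A \cdot \chi_B \equiv 1 \pmod 2$ for $A \neq B$. The $\mathbb{F}_2$-rank of their Gram matrix (an $m \times m$ copy of $J-I$, where $m := |\mathcal{F}|$) equals $m$ when $m$ is even and $m-1$ when $m$ is odd, so the $\chi_A$ span a subspace of $\mathbb{F}_2^X$ of dimension at least $m$ or $m-1$, respectively. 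Applied in $\mathbb{F}_2^X$ itself, which has dimension $[n]_q$, this yields only $|\mathcal{F}| \leq [n]_q - 1$, i.e.\ Theorem~\ref{thm:oddtown-qanalogue-rev}; the task is to improve this by $q^{n-1}$.

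The key additional observation is that every $\chi_A$ for $A \in \mathcal{F}$ actually lies in the proper subspace
\[
V_{\mathrm{even}} := \mathrm{span}_{\mathbb{F}_2}\bigl\{\chi_B : B \in \sub(F_q^n),\ \dim B \text{ even}\bigr\} \subseteq \mathbb{F}_2^X.
\]
Running the Gram-rank bound with $V_{\mathrm{even}}$ in place of $\mathbb{F}_2^X$ gives $m \leq \dim_{\mathbb{F}_2} V_{\mathrm{even}}$ when $m$ is even and $m \leq \dim_{\mathbb{F}_2} V_{\mathrm{even}} + 1$ when $m$ is odd. Since $n$ is even and $q$ is odd, $[n-1]_q = 1 + q + \cdots + q^{n-2}$ is a sum of $n-1$ odd integers and is itself odd, so $[n-1]_q + 1$ is even. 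A brief parity check then shows that the bound $\dim_{\mathbb{F}_2} V_{\mathrm{even}} \leq [n-1]_q$ already forces $|\mathcal{F}| \leq [n-1]_q$ (the even-$m$ case is immediate, and the odd-$m$ case follows because an odd $m$ cannot equal the even value $[n-1]_q + 1$). The conjecture thus reduces to the structural bound
\[
\dim_{\mathbb{F}_2} V_{\mathrm{even}} \;\leq\; [n-1]_q.
\]

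The main obstacle is this dimension bound, which is an $\mathbb{F}_2$-rank computation for the incidence matrix between $1$-dimensional and even-dimensional subspaces of $F_q^n$---a question in the spirit of Hamada's $p$-rank theorems for projective-geometry designs. A natural approach is induction on $n$ via a hyperplane decomposition: fix a hyperplane $W \subseteq F_q^n$, write $X = X_W \sqcup X'$ with $|X_W| = [n-1]_q$ and $|X'| = q^{n-1}$, and consider the restriction map $\rho \colon V_{\mathrm{even}} \to \mathbb{F}_2^{X_W}$. The image of $\rho$ has dimension at most $[n-1]_q$ automatically, so the problem reduces to showing that $\rho$ is injective---equivalently, that no nonzero $\mathbb{F}_2$-combination of characteristic vectors of even-dimensional subspaces is supported entirely on the $q^{n-1}$ one-dimensional subspaces of $F_q^n$ outside $W$. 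Proving this injectivity is the crux of the argument and the step most likely to require a new combinatorial identity in $PG(n-1, q)$.
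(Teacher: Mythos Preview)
This statement is a \emph{conjecture} in the paper, not a theorem: the paper offers no proof, only the supporting example $\mathcal{F}_3$ of Remark~\ref{rem:rev-odd} and ``limited data''. So there is no paper proof to compare against, and the relevant question is simply whether your proposal constitutes a proof.

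It does not. Your reduction to the bound $\dim_{\mathbb{F}_2} V_{\mathrm{even}} \le [n-1]_q$ is clean and correct: the $\chi_A$ for $A\in\mathcal{F}$ do lie in $V_{\mathrm{even}}$, the Gram-matrix argument gives $m\le \dim V_{\mathrm{even}}$ for $m$ even and $m-1\le \dim V_{\mathrm{even}}$ for $m$ odd, and the parity trick (using that $[n-1]_q$ is odd when $n$ is even and $q$ is odd) handles the odd case. The further reduction to injectivity of the restriction $\rho\colon V_{\mathrm{even}}\to\mathbb{F}_2^{X_W}$ is also valid. But you yourself flag the injectivity step as ``the crux of the argument'' and leave it unproved, so what you have is a reformulation of the conjecture, not a resolution.

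Two comments on the reformulation. First, if your dimension bound holds it is sharp: the span of $\{\chi_A:A\in\mathcal{F}_3\}$ already has $\mathbb{F}_2$-dimension $[n-1]_q-1$ (the Gram rank gives $\ge [n-1]_q-1$, and $\sum_{A\in\mathcal{F}_3}\chi_A=0$ over $\mathbb{F}_2$ since each point of $W$ lies in $[n-2]_q\equiv 0$ members of $\mathcal{F}_3$), and adjoining $\mathbf{1}=\chi_{F_q^n}$, which is not supported on $X_W$, pushes the dimension to at least $[n-1]_q$. Second, your bound $\dim V_{\mathrm{even}}\le [n-1]_q$ is a priori \emph{stronger} than the conjecture: it could fail while Conjecture~\ref{conj:reverse-oddtown-q-oddprimepower} remains true, since a large $V_{\mathrm{even}}$ does not by itself produce a large reverse-oddtown family of genuine subspaces. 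So before investing in the Hamada-type rank computation you allude to, it would be prudent to verify $\dim_{\mathbb{F}_2} V_{\mathrm{even}} = [n-1]_q$ numerically for small even $n$ and odd $q$ (e.g.\ $n=4$, $q=3$, where the claim is that $130$ line-characteristic vectors together with $\mathbf{1}$ span exactly a $13$-dimensional subspace of $\mathbb{F}_2^{40}$).
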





\section{Proof of Theorem \ref{thm:Fischer-q}}  

Bose \cite{bose} and Majumdar \cite{majumdar}
used linear algebra and associated incidence vectors to each set to prove Fisher's inequality. In the same spirit, we associate incidence vectors to each subspace to prove  Theorem \ref{thm:Fischer-q}. 

The number of one-dimensional subspaces of $F_q^n$  is $[n]_q$. Let $W_1, \dots, W_{[n]_q}$ denote all the one dimensional subspaces. From each subspace $W_i$, we choose one non-zero vector $v_i$. Then, for any $i \neq j$, $v_i$ and $v_j$ are not scalar multiple of one another. 

For any subspace $A$, we define the incidence vector, corresponding to the subspace $A$, to be a $0/1$ vector (of length $[n]_q$) as follows:

\begin{equation}
\label{eqn:defn-inc-vec}
(f_A)_j = \begin{cases}
1 \hspace{3 mm} \mbox{ if } v_j \in A, \\
0 \hspace{3 mm} \mbox{ if } v_j \notin A.
\end{cases}
\end{equation}

For two vectors $x,y \in \mathbb{R}^{[n]_q}$, let $\langle x, y \rangle $ denote their scalar product. The following lemma connects the scalar product of the incidence vectors of two subspaces with the dimension of their intersection. 

\begin{lemma}
\label{lem:imp-1}
For two subspaces $A, B$ of $F_q^n$ ($A$ and $B$ can be equal)  with $\dim(A \cap B) \geq 1$, 
\[ \langle f_A, f_B \rangle =[ \dim(A \cap B)]_q. \]
\end{lemma}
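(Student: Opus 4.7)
The plan is to reinterpret each entry of the incidence vector $f_A$ in terms of one-dimensional subspaces rather than representative vectors, and then recognize the scalar product as counting one-dimensional subspaces of $A \cap B$.

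First I would observe that since $W_j$ is one-dimensional and $v_j$ is a nonzero vector of $W_j$, we have $W_j = \mathrm{span}(v_j)$. Consequently, for any subspace $A \subseteq F_q^n$, the condition $v_j \in A$ is equivalent to $W_j \subseteq A$. Hence the definition \eqref{eqn:defn-inc-vec} can be rewritten as $(f_A)_j = 1$ if $W_j \subseteq A$ and $(f_A)_j = 0$ otherwise.

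Next I would compute
\[
\langle f_A, f_B \rangle \;=\; \sum_{j=1}^{[n]_q} (f_A)_j (f_B)_j \;=\; \#\{ j : W_j \subseteq A \text{ and } W_j \subseteq B \} \;=\; \#\{ j : W_j \subseteq A \cap B\},
\]
since every product of entries is either $0$ or $1$, and equals $1$ precisely when $W_j$ is contained in both $A$ and $B$, which is the same as $W_j \subseteq A \cap B$.

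Finally I would invoke the standard count of one-dimensional subspaces: the number of $1$-dimensional subspaces of a $d$-dimensional $F_q$-vector space is $\binom{d}{1}_q = [d]_q$. Applying this with $d = \dim(A \cap B)$ (which is at least $1$ by hypothesis, so the set we are counting is indeed indexed by a well-defined subcollection of the $W_j$'s) gives $\langle f_A, f_B \rangle = [\dim(A \cap B)]_q$, as desired. There is no real obstacle here; the only subtle point is the bijective identification between the indices $j$ with $v_j \in A \cap B$ and the one-dimensional subspaces of $A \cap B$, which follows from our having chosen exactly one representative $v_j$ from each $W_j$.
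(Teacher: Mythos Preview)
Your proof is correct and follows essentially the same idea as the paper's: both arguments identify $\langle f_A, f_B \rangle$ with the number of one-dimensional subspaces of $A\cap B$, which is $[\dim(A\cap B)]_q$. Your presentation is somewhat more streamlined, establishing the count directly via the equivalence $v_j\in A\cap B \iff W_j\subseteq A\cap B$, whereas the paper splits the argument into separate $\geq$ and $\leq$ inequalities before arriving at the same count.
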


\begin{proof}
Let $\dim(A \cap B)=t$. Since $A \cap B$ is of dimension $t$, the subspace $A \cap B$ is generated by $t$ linearly independent vectors, say, $\alpha_1,  \dots, \alpha_t$. As $W_1,  \dots, W_{[n]_q}$ are all the one dimensional subspaces $F_q^n$ have, and for each subspace $W_i$, we have chosen one non-zero vector $v_i$ from $W_i$, for every $1 \leq j \leq t$, the vector $\alpha_j$ must be scalar multiple of some $v_{i}$. 


Let $x$ be a linear combination of the vectors $ \alpha_1, \dots, \alpha_t.$ That is, \[x= \sum_{k=1}^{t} a_k \alpha_{k}\] for some scalars $a_k \in F_q$. Now, $\alpha_k \in A \cap B$ for $1 \leq k \leq t$. So, $x \in A \cap B$. So, $x$ must be scalar multiple of some vector from $\{v_1, v_2, \dots, v_{[n]_q} \}$. Let, $x$ be a scalar multiple of the vector $v_x$. Then, \[ (f_A)_x=1=(f_B)_x.\] 
Thus, for any vector which is a linear combination of $\alpha_1,  \dots, \alpha_t$, the corresponding entry in $f_A$ and $f_B$ are both $1$. 
Threfore, we have $\langle f_A, f_B \rangle \geq [ t]_q$. 

We now show that $\langle f_A, f_B \rangle$ cannot be strictly more that $[ t]_q$. We assume the contrary, that is, let $\langle f_A, f_B \rangle > [ t]_q$. Then there exists at least $[ t]_q+1$ indices $i$ for which both $(f_A)_i=1$ and $(f_B)_i=1$. But $A \cap B$ is of dimension $t$ and hence it has $[t]_q$ one-dimensional subspaces, contradiction. 
\end{proof}

\begin{lemma}
\label{lem:imp-2}
For two subspaces $A, B$ of $F_q^n$ with $\dim(A \cap B) =0$, 
\[ \langle f_A, f_B \rangle =0. \]
\end{lemma}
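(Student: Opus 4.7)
The plan is essentially immediate from the definition of the incidence vectors. Since $f_A$ and $f_B$ are $0/1$ vectors indexed by the one-dimensional subspaces $W_1,\dots,W_{[n]_q}$, the scalar product $\langle f_A, f_B\rangle$ counts exactly the number of indices $j$ for which both $(f_A)_j = 1$ and $(f_B)_j = 1$, i.e., the number of chosen representatives $v_j$ that lie in both $A$ and $B$. So I would rewrite
\[
\langle f_A, f_B\rangle \;=\; \#\{\, j : v_j \in A \text{ and } v_j \in B\,\} \;=\; \#\{\, j : v_j \in A \cap B\,\}.
\]

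Under the hypothesis $\dim(A \cap B) = 0$, the intersection $A \cap B$ equals $\{0\}$. But by construction each $v_j$ was selected as a \emph{non-zero} vector from the one-dimensional subspace $W_j$, so no $v_j$ can belong to $\{0\}$. Hence the set on the right is empty, and the scalar product is $0$. I expect no real obstacle here; the only point worth spelling out is that the counting interpretation of $\langle f_A, f_B\rangle$ reduces the claim to the tautology that a non-zero vector cannot sit inside the trivial subspace. This parallels the proof of Lemma \ref{lem:imp-1} with the convention $[0]_q = 0$, so the two lemmas can in fact be viewed as a single statement $\langle f_A, f_B\rangle = [\dim(A\cap B)]_q$ valid for all subspaces $A, B$.
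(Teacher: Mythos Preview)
Your proof is correct and follows essentially the same reasoning as the paper's own argument: since $\dim(A\cap B)=0$ means $A\cap B=\{0\}$ and each $v_j$ is non-zero, no index $j$ can have $(f_A)_j=(f_B)_j=1$, so the scalar product vanishes. Your added remark that this unifies with Lemma~\ref{lem:imp-1} under the convention $[0]_q=0$ is a nice observation but does not change the approach.
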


\begin{proof}
 As $\dim(A \cap B)=0$, there is no non-zero vector in $A \cap B$. So for every $1 \leq j \leq [n]_q$, at least one of $(f_A)_j$ and  $(f_B)_j$ is zero, completing the proof.
\end{proof}

We are now in a position to prove Theorem \ref{thm:Fischer-q}.

\begin{proof}[Proof of Theorem \ref{thm:Fischer-q}]
Let $\mathcal{F} \subseteq \sub(F_q^n)$ be a family such that $\dim(A \cap B)=k$ for all $A, B \in \mathcal{F}$ with $A \neq B$ and some fixed $1 \leq k \leq n$.
For any subspace $A$, let $f_A$ denote the incidence vector corresponding to the subspace $A$, as defined in \eqref{eqn:defn-inc-vec}. We will show that the vectors $\{f_A : A \in \mathcal{F} \}$ are linearly independent. We assume the contrary, that there exists a linear relation 
\[ \sum _{A \in \mathcal{F}} \lambda_A f _A= 0_v\] 
with not all coefficients $\lambda_A $ being zero where $0_v$ denotes the zero vector. Then, we have 
\begin{equation}
\label{eqn:1}
\left( \sum _{A \in \mathcal{F}} \lambda_A f _A \right)\left( \sum _{A \in \mathcal{F}} \lambda_A f _A \right) = 0.
\end{equation}
Using Lemma \ref{lem:imp-1}, 
we have 
\begin{align*}
    \left( \sum _{A \in \mathcal{F}} \lambda_A f _A \right)\left( \sum _{A \in \mathcal{F}} \lambda_A f _A \right)  = & \sum _{A \in \mathcal{F}} \lambda_A ^2 \langle f_A, f_A \rangle + 2 \sum _{A, B \in \mathcal{F}, A \neq B}   \lambda_A \lambda_B \langle f_A, f_B \rangle \\
     = &  \sum _{A \in \mathcal{F}} \lambda_A ^2 [\dim(A)]_q + 2 \sum _{A, B \in \mathcal{F}, A \neq B}   \lambda_A \lambda_B  [\dim(A \cap B)]_q.
\end{align*}
As $\dim(A \cap B)=k$ for all $A, B \in \mathcal{F}$ with $A \neq B$, we have 
\begin{align}
  \left( \sum _{A \in \mathcal{F}} \lambda_A f _A \right)\left( \sum _{A \in \mathcal{F}} \lambda_A f _A \right)    = & \sum _{A \in \mathcal{F}} \lambda_A ^2 [\dim(A)]_q + 2 \sum _{A, B \in \mathcal{F}, A \neq B}   \lambda_A \lambda_B  [k]_q.  \nonumber 
  \end{align} 
We rewrite this as 
\begin{equation}
  \left( \sum _{A \in \mathcal{F}} \lambda_A f _A \right)\left( \sum _{A \in \mathcal{F}} \lambda_A f _A \right)    =  \sum _{A \in \mathcal{F}} \lambda_A ^2 \left([\dim(A)]_q - [k] _q \right) + [k]_q \left( \sum _{A \in \mathcal{F}}   \lambda_A \right)^2. \label{eqn:2} 
\end{equation}
From \eqref{eqn:1} and \eqref{eqn:2}, we get 
\begin{equation}
\label{eqn:3}
     \sum _{A \in \mathcal{F}} \lambda_A ^2 \left([\dim(A)]_q - [k] _q \right) + [k]_q \left( \sum _{A \in \mathcal{F}}   \lambda_A \right)^2= 0.
\end{equation}
We now note that $\dim(A) \geq k$ for all $A \in \mathcal{F}$, and also $\dim(A)=k$ for at most one $A \in \mathcal{F}$, otherwise the intersection condition would not be satisfied. Now, $\dim(A) > k$ implies that $[\dim(A)]_q > [k]_q$ and hence $\lambda_A \neq 0$ for at most one $A \in \mathcal{F}$. 
But in that case $\displaystyle \left( \sum _{A \in \mathcal{F}}   \lambda_A \right)^2$ must be greater than $0$, a contradiction. 

Therefore, the vectors $\{f_A : A \in \mathcal{F} \}$ are linearly independent and hence $|\mathcal{F}| \leq [n]_q$, completing the proof. 
\end{proof}

\section{Proofs of Theorems \ref{thm:oddtown-qanalogue} and \ref{thm:oddtown-qanalogue-rev} }

\begin{proof}[Proof of Theorem \ref{thm:oddtown-qanalogue}] 
Let $q$ be an odd prime power and $\mathcal{F} \subseteq \sub(F_q^n)$ be such that $\dim(A)$ is odd for all $A \in \mathcal{F}$ and $\dim(A \cap B)$ is even for all $A, B \in \mathcal{F}$. Again, as in the proof of Theorem \ref{thm:Fischer-q}, 
for any subspace $A \in \mathcal{F}$, we consider the incidence vector $f_A$, as defined as in \eqref{eqn:defn-inc-vec} and we will show that the vectors $\{f_A : A \in \mathcal{F} \}$ are linearly independent. We assume that there exists a linear relation 
\[ \sum_{A \in \mathcal{F}} \lambda_A f_A= 0_v.\] 
Therefore, for any $B \in \mathcal{F}$, we have 
\begin{equation}
\label{eqn:imp-4}
    \langle \sum_{A \in \mathcal{F}} \lambda_A f_A , f_B \rangle = 0. 
    \end{equation} 
We now observe the following:
\begin{enumerate}
    \item If $A \neq B$ and $A, B \in \mathcal{F}$, by Lemma \ref{lem:imp-1}, we have 
    \begin{equation}
    \label{eqn:imp-5}
        \langle f_A, f_B \rangle = [\dim(A \cap B)]_q= 1+q+\dots+q^{t-1}.
        \end{equation}
        As $q$ is odd and $t$ is even,  $\langle f_A, f_B \rangle$ is even. 
    \item For $A \in \mathcal{F}$, $\langle f_A, f_A \rangle $ is odd, since 
    \begin{equation}
    \label{eqn:imp-6}
        \langle f_A, f_A \rangle = [\dim(A))]_q= 1+q+\dots+q^{\dim(A)-1}   
        \end{equation} 
        and $\dim(A)$ is odd. 
\end{enumerate}
 Using \eqref{eqn:imp-4}, 
 we have
\begin{equation}
   0 =  \langle \sum_{A \in \mathcal{F}} \lambda_A f_A , f_B \rangle  =    \sum_{A \in \mathcal{F}, A \neq B} \lambda _A \langle  f_A , f_B \rangle + \lambda_B \langle f_B, f_B \rangle. 
\end{equation}
By \eqref{eqn:imp-5} and \eqref{eqn:imp-6}, $\lambda_B$ must be zero and thus the vectors $\{f_A : A \in \mathcal{F} \}$
are linearly independent, 
completing the proof. 
\end{proof}

Before proceeding towards 
the proof of Theorem \ref{thm:oddtown-qanalogue-rev}, we need the following lemma \cite[Exercise 1.1.3]{babai-frankl}. 

\begin{lemma}
\label{lem:imp-IJ}
Let $J_m$ be the $m \times m$ matrix with 
all entries $1$ and 
let $I_m$ be
the $m \times m$ identity matrix. 
The rank of $J_m-I_m$ 
over the field $F_2$ is $m$ if $m$ is even and $m-1$ is $m$ is odd.
\end{lemma}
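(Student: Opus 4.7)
The plan is to compute the $F_2$-rank of $M := J_m - I_m$ by directly computing its kernel. Over $F_2$ we have $M = J_m + I_m$, and $Mv = 0$ is equivalent to $J_m v = v$. For any column vector $v = (v_1,\dots,v_m)^{T}$ over $F_2$, one has $J_m v = s\cdot \mathbf{1}$, where $s = v_1 + \cdots + v_m \in F_2$ and $\mathbf{1}$ denotes the all-ones vector. Hence $Mv = 0$ forces $v = s \cdot \mathbf{1}$, so every kernel vector must itself be a multiple of $\mathbf{1}$.

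It remains to check which scalar multiples $c \mathbf{1}$ actually lie in the kernel. Plugging in gives $M(c \mathbf{1}) = c(m-1)\mathbf{1}$, so we need $c(m-1) \equiv 0 \pmod 2$. If $m$ is even, then $m-1 \equiv 1$ in $F_2$, forcing $c = 0$; the kernel is trivial and $\mathrm{rank}_{F_2}(M) = m$. If $m$ is odd, then $m-1 \equiv 0$ in $F_2$, so any $c \in F_2$ works; the kernel equals the one-dimensional span of $\mathbf{1}$ and $\mathrm{rank}_{F_2}(M) = m-1$.

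This is essentially a three-line computation, so I do not expect a real obstacle. The only subtlety is to resist the reflex, valid over $\mathbb{R}$ or $\mathbb{Q}$, of reading off the rank from the eigenvalues $m-1$ (with multiplicity $1$) and $-1$ (with multiplicity $m-1$) of $J_m - I_m$: over $F_2$ the eigenvalue $-1 = 1$ changes the count, so one must argue directly as above rather than invoking the characteristic polynomial.
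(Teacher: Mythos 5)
Your proof is correct. Note that the paper does not actually prove Lemma \ref{lem:imp-IJ}; it is quoted from Babai--Frankl (Exercise 1.1.3) without proof, so there is no in-paper argument to compare against. Your kernel computation is a clean, self-contained justification: the identity $J_m v = (\sum_i v_i)\mathbf{1}$ correctly forces every kernel vector of $J_m+I_m$ over $F_2$ to be a scalar multiple of $\mathbf{1}$, and the parity check $c(m-1)\equiv 0 \pmod 2$ correctly separates the even and odd cases. Your closing caveat about not reading the $F_2$-rank off the real eigenvalues $m-1$ and $-1$ is also well taken.
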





\begin{proof}
[Proof of Theorem \ref{thm:oddtown-qanalogue-rev}]
Let $q$ be odd 
and $\mathcal{F} \subseteq \sub(F_q^n)$ be a family such that $\dim(A)$ is even for all $A \in \mathcal{F}$ and 
$\dim(A \cap B)$ is odd for all $A, B \in \mathcal{F}$. Let $\mathcal{F}=\{A_1, \dots, A_m\}$. 

We consider the $m \times [n]_q$ matrix $M$ whose rows correspond to the members of $\mathcal{F}$, columns correspond 
to the vectors $v_j$ and
the entry $M_{ij}$ is defined as: 
 \begin{equation}
 \label{eqn:defn-inc-mat-M}
 M_{i,j}: = \begin{cases}
1 \hspace{3 mm} \mbox{ if } v_j \in A_i, \\
0 \hspace{3 mm} \mbox{ if } v_j \notin A_i.
\end{cases}
 \end{equation}
 We claim that the rows of $M$ are linearly independent over $F_2$. 

 We consider the matrix $C= M M^T$ where $M^T$ denotes the transpose of $M$. Then,
\begin{equation}
\label{eqn:rank-prod-less-indiv}
 rank(M) \geq rank(C).
 \end{equation}

 From Lemma \ref{lem:imp-1} and Lemma \ref{lem:imp-2}, we see that the matrix $C$ is a $m \times m $ matrix with 
 \begin{equation}
 \label{eqn:defn-inc-mat-A}
 C_{i,j}: = \begin{cases}
 0 \hspace{3 mm} \mbox{ if } i = j , \\
 1  \hspace{3 mm} \mbox{ if } i \neq j .
 \end{cases}
 \end{equation}

Thus, $C= J_m-I_m$ and so we have
$rank(M) \geq rank(MM^T)= rank(J_m-I_m). $
The matrix $M$ clearly has a nontrivial kernel. 
Let $v$ denote the $[n]_q \times 1$ column vector with all ones. Since, the dimension of each subspace of $\mathcal{F}$ is even, by Lemma \ref{lem:imp-1}, each row has an even number of ones. Thus, $Mv \equiv 0_v \mod 2$. So,
\[ rank(J_m - I_m) \leq rank(M) \leq [n]_q-1.  \]
By Lemma \ref{lem:imp-IJ},
$m \leq [n]_q-1$ when $n$ is even and $m \leq [n]_q$ when $n$ is odd.
This completes the proof of the theorem. 
\end{proof}

We now mention the following skew-oddtown theorem \cite{babai-frankl} and a $q$-analogue. 

\begin{theorem}
\label{thm:skew-oddtown}
Let $A_1, \dots, A_m$ and $B_1, \dots, B_m$ be distinct subsets of $[n]$ such that $|A_i \cap B_i|$ is odd
for every $1 \leq i \leq m$ and $|A_i \cap B_j|$ is even for every $i \neq j$. Then, $m \leq n$.  
\end{theorem}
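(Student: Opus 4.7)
The plan is to adapt the linear algebra method to two families of incidence vectors, working over the field $F_2$. I would assign to each subset $A_i$ its characteristic vector $a_i \in F_2^n$, with $(a_i)_k = 1$ if and only if $k \in A_i$, and similarly to each $B_j$ the vector $b_j \in F_2^n$.

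The key observation is that over $F_2$ the scalar product $\langle a_i, b_j\rangle$ equals $|A_i \cap B_j| \pmod{2}$. Hence the two hypotheses of the theorem translate, respectively, into $\langle a_i, b_i\rangle = 1$ for every $i$ and $\langle a_i, b_j\rangle = 0$ whenever $i \neq j$. Forming the $m \times n$ matrices $M_A$ and $M_B$ whose rows are the $a_i$ and the $b_j$, respectively, these identities package into the single matrix equation
\[ M_A M_B^{T} = I_m \qquad \text{over } F_2. \]

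From $M_A M_B^{T} = I_m$ we immediately obtain $m = \mathrm{rank}(I_m) \le \mathrm{rank}(M_A)$, and since $M_A$ has only $n$ columns, the bound $m \le n$ follows at once.

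There is really no serious obstacle here; this is the standard linear-algebraic proof of skew-oddtown-type results. The only point worth underlining is that the parity hypotheses must be interpreted over $F_2$, so that ``even'' becomes $0$ and ``odd'' becomes $1$, which is precisely what lets the intersection conditions directly encode the entries of $M_A M_B^{T}$. A $q$-analogue (if that is what comes next) should follow by the same template, replacing $|A_i \cap B_j|$ with $[\dim(A_i \cap B_j)]_q$ via Lemma \ref{lem:imp-1} and exploiting that $[t]_q$ shares the parity of $t$ when $q$ is an odd prime power, exactly as in the proof of Theorem \ref{thm:oddtown-qanalogue}.
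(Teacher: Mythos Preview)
Your proof is correct and is exactly the standard argument; note, however, that the paper does not itself prove Theorem~\ref{thm:skew-oddtown} (it is quoted from \cite{babai-frankl}), and for the $q$-analogue it only remarks that the proof is identical to that of Theorem~\ref{thm:oddtown-qanalogue}. Specialized to the classical setting, that template---assume $\sum_i \lambda_i a_i = 0$ over $F_2$, pair with each $b_j$, and use the parity hypotheses to force $\lambda_j = 0$---is precisely your matrix identity $M_A M_B^{T} = I_m$ read one row at a time, so the two approaches coincide.
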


\begin{theorem}
\label{thm:skew-oddtown-q-analogue}
Let $A_1, \dots, A_m$ and $B_1, \dots, B_m$ be distinct subspaces of $F_q^n$ such that $\dim(A_i \cap B_i)$ is odd for every $1 \leq i \leq m$ and 
$\dim(A_i \cap B_j)$ is even for every $i \neq j$. Then, if $q$ is an odd prime power, $m \leq [n]_q$.  
\end{theorem}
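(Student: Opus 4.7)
The plan is to follow the incidence vector / linear algebra method used in the proofs of Theorems \ref{thm:Fischer-q} and \ref{thm:oddtown-qanalogue}, now with two parallel families indexed by the same set and everything reduced modulo $2$. For each $1 \leq i \leq m$, let $f_{A_i}, f_{B_i} \in \{0,1\}^{[n]_q}$ be the incidence vectors of $A_i$ and $B_i$ as in \eqref{eqn:defn-inc-vec}. My goal will be to show that the vectors $\{f_{A_1}, \dots, f_{A_m}\}$ are linearly independent over $F_2$, which immediately gives $m \leq [n]_q$.

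The central computation is the mod $2$ reduction of the pairings $\langle f_{A_i}, f_{B_j} \rangle$. By Lemma \ref{lem:imp-1} (or Lemma \ref{lem:imp-2} in the degenerate case $\dim(A_i\cap B_j)=0$) one has $\langle f_{A_i}, f_{B_j} \rangle = [t]_q = 1 + q + q^2 + \cdots + q^{t-1}$ with $t = \dim(A_i \cap B_j)$. Since $q$ is an odd prime power, each summand is odd, so $\langle f_{A_i}, f_{B_j} \rangle \equiv t \pmod{2}$. Under the hypotheses of the theorem, this parity is odd precisely when $i=j$ and even otherwise. Consequently, the $m \times m$ matrix $C$ with $C_{ij} = \langle f_{A_i}, f_{B_j} \rangle$ reduces to the identity matrix $I_m$ over $F_2$.

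To close the argument, I would form the $m \times [n]_q$ matrices $F$ and $G$ over $F_2$ whose rows are $f_{A_i}$ and $f_{B_j}$ respectively, so that $F G^{T} \equiv I_m \pmod{2}$. Any hypothetical linear relation $\sum_{i} c_i f_{A_i} = 0_v$ over $F_2$, paired with $f_{B_j}$ and read through the identity reduction, would force $c_j = 0$ for every $j$. Hence the rows of $F$ are linearly independent over $F_2$, and $m = \mathrm{rank}_{F_2}(F) \leq [n]_q$, which is the desired bound.

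I do not anticipate a major obstacle: the argument is essentially a skew variant of Theorem \ref{thm:oddtown-qanalogue}, with the triangular matrix argument replaced by the fact that an $m \times m$ matrix equal to $I_m$ over $F_2$ automatically has full row rank. The only subtleties worth flagging in the write-up are the boundary case $\dim(A_i \cap B_j)=0$, which is consistent with the parity reduction since $[0]_q=0$, and the essential role of $q$ being odd, without which the identity $[t]_q \equiv t \pmod{2}$ fails and the whole reduction collapses.
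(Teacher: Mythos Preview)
Your proposal is correct and follows essentially the same approach the paper indicates (the paper omits the proof, saying it is identical to that of Theorem~\ref{thm:oddtown-qanalogue}): associate incidence vectors to the $A_i$, pair with the incidence vectors of the $B_j$, use Lemmas~\ref{lem:imp-1}--\ref{lem:imp-2} together with the parity identity $[t]_q \equiv t \pmod{2}$ for odd $q$, and conclude linear independence of $\{f_{A_i}\}$ over $F_2$. Your matrix formulation $FG^T \equiv I_m \pmod{2}$ is just a compact repackaging of that same computation.
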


As the proof of Theorem \ref{thm:skew-oddtown-q-analogue} follows in an identical way to the proof of Theorem
\ref{thm:oddtown-qanalogue}, we omit the proof. 
We conclude the paper with the following question. 

\begin{question}
What can we say regarding $q$-analogues of oddtown and reverse oddtown theorem when $q$ is a power of $2$?  
\end{question}

\subsection*{Acknowledgements}
The author thanks Angsuman Das for insightful discussions and Narayanan Narayanan for introducing him to the beautiful 
Oddtown Theorem during a workshop a few years ago. The author gratefully acknowledges the support of the NBHM Post Doctoral Fellowship (File No. \linebreak 0204/10(10)/2023/R\&D-II/2781), Government of India, and expresses 
his sincere thanks to the National Board of Higher Mathematics for this funding. The author also appreciates the excellent working environment provided by the Department of Mathematics, Indian Institute of Science.

\end{document}